\newtheorem{definition}{Definition}
\newtheorem{theorem}{Theorem}
\newtheorem{proposition}{Proposition}
\def\de{\delta}
\def\eps{\varepsilon}
\def\al{\alpha}
\def\la{\lambda}
\def\kappa{\varkappa}
\def\si{\sigma}
\def\C{{\mathbb C}}
\def\Y{{\mathbb Y}}
\def\sN{{\mathfrak S}_N}
\def\sinf{{\mathfrak S}_{\mathbb N}}
\def\T{{\cal T}}
\def\Todd{{\cal T}^{\rm odd}}
\def\Teven{{\cal T}^{\rm even}}
\def\Tproper{{\cal T}^{\rm proper}}
\def\H{{\cal H}}
\def\End{\operatorname{End}}
\def\beq{\begin{equation}}
\def\eeq{\end{equation}}
\vershik\url{vershik@pdmi.ras.ru}
\natalia\url{natalia@pdmi.ras.ru}
\author{N.~V.~Tsilevich\thanks{%
St.~Petersburg Department of Steklov Institute of Mathematics.
E-mail: \natalia, \vershik. Supported by the grants
RFBR 11-01-12092-ofi-m and RFBR 11-01-00677-a.}
\and A.~M.~Vershik\footnotemark[1]}
\title{Infinite-dimensional Schur--Weyl duality and the
Coxeter--Laplace operator}
\begin{document}
\maketitle

\begin{abstract}
We extend the classical Schur--Weyl duality between representations of the groups
$SL(n,\C)$ and $\sN$ to the case of $SL(n,\C)$ and the infinite
symmetric group $\sinf$. Our construction is based on a ``dynamic,''
or inductive, scheme of Schur--Weyl  dualities.
It leads to a new class of representations of the
infinite symmetric group, which have not appeared earlier.
We describe these representations and, in particular, find their
spectral types with respect to the Gelfand--Tsetlin algebra.
The main example of such a representation acts
in an incomplete infinite tensor product. As an important
application, we consider the weak limit of the so-called
Coxeter--Laplace operator, which is essentially the Hamiltonian of the
XXX Heisenberg model, in these representations.
\end{abstract}

\section{Introduction}

\subsection{General setting}
We extend the classical Schur--Weyl duality \cite[Chap.~4, Sec.~4]{Weyl}
between irreducible representations of
the general linear group
$GL(n,\C)$ (or the special linear group $SL(n,\C)$)
and the (finite) symmetric group
$\sN$ to the case of $SL(n,\C)$ and the infinite
symmetric group $\sinf$. Usually, one considers only the ``static''
Schur--Weyl duality, when the parameter $N$ is fixed.
Our construction is based on a ``dynamic'' view of this duality, which
allows us to consider an
inductive scheme and pass to
the limit, obtaining an infinite-dimensional version of the
Schur--Weyl duality.
This construction leads to a new class of
representations of the
infinite symmetric group, which have not appeared earlier.
The main example is the so-called
tensor representation, which is realized in an incomplete infinite tensor
product.

In this paper, we consider only the simplest case $N=2$, since the
case of a general
$N$ can be handled in exactly the same way.

One of our motivations for  considering Schur--Weyl representations was to
study the behavior of the so-called
Coxeter--Laplace operator $L_N$, or the Hamiltonian of the XXX Heisenberg
model, in these representations.
In particular, we show that a generalized Schur--Weyl scheme
allows one to construct a representation in which the weak limit of
$\frac1NL_N$ is a scalar operator with the scalar
arbitrarily close to the maximum possible value $c_{\rm max}$.

Our representations have natural links to representations of the
Virasoro algebra, Glimm algebra, and other important
representation-theoretic objects. Further analysis should clarify
these relations. We would also like to mention the paper
\cite{Penkov}, where another infinite-dimensional generalization of the
Schur--Weyl scheme is developed. The difference is as follows:
starting from the classical
Schur--Weyl duality between $GL(n,\C)$ and $\sN$, we keep $n$ fixed
and send $N$ to infinity, obtaining a duality between $GL(n,\C)$
and $\sinf$; in \cite{Penkov}, on the contrary, $N$ is kept fixed and
$n$ goes to infinity, resulting in a duality between
$\mathfrak{gl}_\infty$ and $\sN$. Another related paper is \cite{FF},
where an inductive construction of representations of the
affine Lie algebra $\widehat{\mathfrak{sl}_2}$ is suggested, which
starts from the tensor representations of ${\mathfrak{sl}_2}$ and uses
the notion of fusion product of representations.

\subsection{Main results}
Now let us describe our results in more detail.

We consider the representations of $\sinf$ that are the
inductive limits of two-row representations of the finite symmetric
groups under so-called {\it Schur--Weyl embeddings}, which
send a representation of $\sN$ to a representation of ${\mathfrak
S}_{N+2}$ and
respect both the actions of $SL(2,\C)$ and $\sN$. The structure of a
general representation of this kind (which we also call {\it
Schur--Weyl representations} of $\sinf$) is described in
Theorem~\ref{th:reprstr}. Namely,
\beq\label{decomp}
\H=\sum_{k}\Pi_{k}\otimes M_{k+1},
\eeq
where $\Pi_{k}$ is an irreducible representation of $\sinf$
(the inductive limit of a sequence of irreducible
representations of the symmetric groups), $M_k$ is the irreducible
representation of $SL(2,\C)$ of dimension $k$, and the sum is taken over
either odd or even $k$; moreover,
$\Pi_{k}\otimes M_{k+1}$ is an irreducible representation of
$\sinf\times SL(2,\C)$, and the operator algebras generated by the actions
of $\sinf$ and $SL(2,\C)$ are mutual commutants.

Thus it suffices to study the
irreducible representations $\Pi_k$ of $\sinf$
obtained in this way. In particular, we show that the spectral type
of such a representation with respect to the Gelfand--Tsetlin algebra
is determined by a $\sigma$-finite, Bernoulli-like, noncentral measure
on the space of infinite Young tableaux
(Theorem~\ref{th:Bernoulli}).

The most interesting example of a Schur--Weyl representation is the
so-called {\it tensor} representation, obtained from the unique
Schur--Weyl embeddings that preserve the tensor product structure of
the space $(\C)^{\otimes n}$ to which the Schur--Weyl duality applies.
This representation is studied in Section~\ref{product}. It can be
realized in the incomplete tensor product of the spaces $\C^4$, the
distinguished vector being the unique $SL(2,\C)$-invariant vector in $\C^4$.

Observe an analogy between the decomposition~\eqref{decomp} of a
Schur--Weyl representation of the infinite symmetric group and the
decomposition (a limiting case of the Goddard--Kent--Olive construction)
$$
{\cal M}_j=\sum_{k}L(1,k^2)\otimes M_{k+1},\qquad j=0,1/2,
$$
where ${\cal M}_j$ is the level~$1$ spin~$j$ fundamental representation of the affine Lie
algebra $\widehat{\mathfrak{sl}_2}$, $L(1,k^2)$ is the irreducible
representation of the Virasoro algebra $\mathbf{Vir}$ with central charge~$1$ and
conformal dimension~$k^2$, the summation is over all positive integers
$k$ of the same parity as $j$, and the algebras $\mathbf{Vir}$ and
${\mathfrak{sl}_2}\subset\widehat{\mathfrak{sl}_2}$ are mutual
commutants (see, e.g., \cite{Wasserman}). This analogy suggests that one may introduce a natural
action of the Virasoro algebra in a Schur--Weyl module, or,
equivalently, a natural action of the infinite symmetric group in the
fundamental module ${\cal M}_j$.

The last section of the paper concerns the so-called
{\it periodic Coxeter Laplacian}, or the {\it Coxeter--Laplace operator}.
This is the operator
$L_N=Ne-(s_1+{\ldots} +s_N)$
in the group algebra of the symmetric group $\sN$, where $s_k$ is the
Coxeter transposition $(k,k+1)$ (with
$N+1\equiv 1$). If $\pi_N$ is the standard representation of
$\sN$ in
$(\C^2)^{\otimes N}$,
then the operator $\pi_N(L_N)$ is related to the Hamiltonian of the XXX
Heisenberg model on the periodic one-dimensional lattice with $N$
sites (see, e.g., \cite{Iz,FT}) by the formula $H=\frac J4(2L-N)$,
where $J>0$ corresponds to
the ferromagnetic case, and
$J<0$ to the antiferromagnetic one.
In Section~\ref{laplacian}, we find the ``antiferromagnetic'' weak
limit of the Coxeter--Laplace operator in the stationary Schur--Weyl representations of the
infinite symmetric groups (Proposition~\ref{prop:p}). It is a scalar
operator with constant depending on the parameter of Schur--Weyl
embeddings, and the maximum possible value of this constant is greater
than for all other natural representations of $\sinf$ considered so far.
But it is still smaller than the limiting value $c_{\rm max}$ of the
ground energy first computed in \cite{Hul} and rigorously proved
in \cite{yangyang2}. However, in Proposition~\ref{prop:apprmax} we
show that by extending the construction of Schur--Weyl embeddings one
can build a representation of $\sinf$ with the corresponding constant
arbitrarily close to $c_{\rm max}$.

\subsection{Notation}
Here we present necessary notation from the combinatorics of Young
diagrams and tableaux. Let $\Y_N$ be the set
of Young diagrams with $N$ cells and
$\Y_N^{l}\subset\Y_N$ be the set of Young diagrams with $N$ cells and at
most $l$ rows. Let $\T_N$ be the set of Young tableaux with $N$ cells.
Given $\la\in\Y_N$, let $\T_N(\la)\subset\T_N$ be
the set of Young tableaux with diagram $\la$, that is, the set of
paths in the Young graph from the unique vertex $\emptyset$ of zero
level to $\la$. Given Young
diagrams $\la\in\Y_N$, $\mu\in\Y_{N+k}$ such that $\la\subset\mu$, we
also denote by $\T(\la,\mu)$ the set of paths in the Young graph from
$\la$ to $\mu$, and by $H(\la,\mu)$ the Hilbert space in which the
elements of
$\T(\la,\mu)$ are an orthonormal basis.
By $[t]_N$ we denote the initial segment of length $N$ of a Young tableau
$t$.
Finally, given $\la\in\Y_N$ and $\mu\in\Y_{N+2}$ with at most two rows each, we say that the pair
$(\la,\mu)$ is {\it nice} if $\la\subset\mu$ and $\mu$ is obtained
from $\la$ by adding one cell to each row. Obviously, if a pair
$(\la,\mu)$ is nice, then $\T(\la,\mu)$ contains exactly two elements.

\section{An inductive construction of Schur--Weyl embeddings}
\label{swembeddings}

The classical Schur--Weyl duality (see, e.g., \cite{FultonHarris})
is a fundamental theorem that
relates irreducible representations of the general linear group
$GL(l,\C)$ and the
symmetric group ${\mathfrak S}_N$ in the tensor power $(\C^l)^{\otimes
N}$, where
${\mathfrak S}_N$ permutes the factors and $GL(l,\C)$
acts by the simultaneous matrix multiplication:
$$
(\C^l)^{\otimes N}=\sum_{\la\in\Y_N^{l}}\pi_\la\otimes\rho_\la,
$$
where $\pi_\la$ is the irreducible representation of $\sN$
corresponding to $\la$ and $\rho_\la$ is the irreducible
representation of $GL(l)$ with signature $\la$. The operator algebras
generated by the actions of $\sN$ and $GL(l)$, respectively, are
mutual commutants in the whole operator algebra $\End((\C^l)^{\otimes N})$.

Consider the particular case $l=2$. For definiteness, let $N=2n+1$ be odd. Then
$$
(\C^2)^{\otimes N}=\sum_{\la\in\Y_N^{2}}\pi_\la\otimes\rho_\la
=\sum_{k=0}^{n}\pi_{\la^{(k)}}\otimes\rho_{\la^{(k)}},
$$
where $\la^{(k)}=(\la^{(k)}_1,\la^{(k)}_2)=(n+k+1,n-k)$, so that $\la^{(k)}_1-\la^{(k)}_2=2k+1$.

Consider the restriction of $\rho_{\la^{(k)}}$ to the subgroup
$SL(2,\C)\subset GL(2,\C)$. This is an irreducible representation of
$SL(2,\C)$ that depends
only on the difference $\la^{(k)}_1-\la^{(k)}_2=2k+1$, i.e., on $k$.
Let $M_{m}$ be the irreducible representation of $SL(2,\C)$
of dimension $m$. Then
\beq\label{finodd}
(\C^2)^{\otimes (2n+1)}
=\sum_{j=0}^{n}\pi_{(n+j+1,n-j)}\otimes M_{2j+2}.
\eeq
In a similar way, for even $N=2n$ we have
\beq\label{fineven}
(\C^2)^{\otimes (2n)}
=\sum_{j=0}^{n}\pi_{(n+j,n-j)}\otimes M_{2j+1}.
\eeq

Now consider embeddings $(\C^2)^{\otimes
N}\hookrightarrow(\C^2)^{\otimes (N+2)}$ that preserve this
Schur--Weyl structure. We endow the tensor spaces under consideration
with the standard inner product and regard all representations as
unitary representations. Observe that both in $(\C^2)^{\otimes N}$ and
$(\C^2)^{\otimes (N+2)}$ we have actions of $SL(2,\C)$
and ${\mathfrak S}_{N}$ (with the standard embedding
${\mathfrak S}_{N}\subset{\mathfrak S}_{N+2}$).

\begin{definition}
Isometric embeddings $(\C^2)^{\otimes
N}\hookrightarrow(\C^2)^{\otimes (N+2)}$
that are equivariant with respect to both these actions (in other
words, equivariant  with
respect to the action of $\sN\times SL(2,\C)$)
will be called {\it Schur--Weyl
embeddings}.
\end{definition}

Our first purpose is to describe all
Schur--Weyl embeddings $(\C^2)^{\otimes
N}\hookrightarrow(\C^2)^{\otimes (N+2)}$. Proposition~\ref{embed} below
says that such an embedding
is determined by a sequence of vectors from the one-dimensional
complex circle $\mathbb T^1$.

\begin{proposition}\label{embed}
The Schur--Weyl embeddings $(\C^2)^{\otimes
N}\hookrightarrow(\C^2)^{\otimes (N+2)}$ with $N=2n-1$ or $N=2n$ are
indexed by the elements of
$(\mathbb T^1)^{n}$, where $\mathbb T^1$ is the one-dimensional
complex circle: $\mathbb T^1=\{z\in\C:|z|=1\}$.
\end{proposition}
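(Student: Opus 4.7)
My plan is to apply the Schur--Weyl decomposition and Schur's lemma to reduce the classification to a parameter count per isotypic component. First I would write $(\C^2)^{\otimes N} = \bigoplus_{\la\in\Y_N^2}\pi_\la\otimes M_{m+1}$ with $m=\la_1-\la_2$, and similarly decompose the target $(\C^2)^{\otimes(N+2)} = \bigoplus_{\mu\in\Y_{N+2}^2}\pi_\mu\otimes M_{m_\mu+1}$. Restricting the target to $\sN\times SL(2,\C)$, Young's branching rule gives $\pi_\mu|_{\sN} = \bigoplus_\la|\T(\la,\mu)|\cdot\pi_\la$, so the target decomposes further into $\sN\times SL(2,\C)$-irreducibles indexed by pairs $(\la,\mu)$ with $\la\subset\mu$ in the Young graph.

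By Schur's lemma, the embedding is determined by its restrictions to each source isotypic $\pi_\la\otimes M_{m+1}$. The $SL(2,\C)$-type $M_{m+1}$ forces the image into the target summand with $\mu = (\la_1+1,\la_2+1)$, the unique $\mu\in\Y_{N+2}^2$ with $\mu\supset\la$ and $m_\mu=m$; $\sN$-equivariance then confines the map to the $\pi_\la$-multiplicity subspace of $\pi_\mu|_{\sN}$, which carries a natural orthonormal basis indexed by $\T(\la,\mu)$. Every such pair is nice in the sense of Section~\ref{swembeddings}, so this multiplicity space is two-dimensional except for the degenerate case $\la=(n,n)$ when $N=2n$, where it is one-dimensional.

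The isometric condition together with a natural normalization should then reduce the parameter on each nice pair to a single phase $\in\mathbb{T}^1$. A direct count yields $n$ such pairs in both cases: for $N=2n-1$ the $n$ diagrams $\la=(n+j,n-1-j)$, $j=0,\ldots,n-1$, each give one nice pair; for $N=2n$ the $n$ nondegenerate diagrams $\la=(n+j,n-j)$ with $j\geq 1$ each give one, and the degenerate $\la=(n,n)$ contributes only an overall phase that can be absorbed. This produces the parameter space $(\mathbb{T}^1)^n$. The main obstacle is this last reduction: the isometric constraint applied directly to the two-dimensional multiplicity space gives a three-sphere of unit vectors, so cutting this down to a single circle per nice pair requires identifying a canonical base embedding (plausibly the insertion of the $SL(2,\C)$-singlet at the two newly added positions) with respect to which $\mathbb{T}^1$ parametrizes the residual phase freedom.
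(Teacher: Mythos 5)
Your route is the same as the paper's: decompose source and target over $\sN\times SL(2,\C)$, use Schur's lemma to force each source component $\pi_\la\otimes M_{\la_1-\la_2+1}$ into the unique target component with $\mu=(\la_1+1,\la_2+1)$, and reduce the remaining choice to a unit vector in the multiplicity space $H(\la,\mu)$, two-dimensional for each of the $n$ nice pairs and one-dimensional for $\la=(n,n)$ in the even case. The genuine gap is exactly the step you flag at the end, and the repair you sketch does not close it: fixing a ``canonical base embedding'' and keeping a residual phase only parametrizes the scalar multiples of that one base vector, i.e.\ a single point of the projective line of possible images, so it neither cuts the unit sphere $S^3$ of $H(\la,\mu)\simeq\C^2$ down to a circle's worth of genuinely distinct equivariant isometries nor matches how the parameter is used afterwards. (The only place a phase can really be absorbed is the degenerate even component $\la=(n,n)$, as you note.)

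What the paper does at this point is a normalization rather than a further argument: it identifies all multiplicity spaces of nice pairs with one model space $H^{1,1}$ carrying the standard tableau basis $\{t_{21},t_{12}\}$, and the parameter on each component is the unit vector $h=p\,t_{21}+q\,t_{12}$ with \emph{real} $p,q$, $p^2+q^2=1$; this real circle is what is called $\mathbb T^1\subset H^{1,1}$ there, and it is exactly the form in which the parameter reappears later ($h_j=p_jt_{21}+q_jt_{12}$ with $p_j^2+q_j^2=1$, $p\in[-1,1]$, $q=\sqrt{1-p^2}$, with $(p,q)$ and $(-p,-q)$ giving equivalent embeddings). So to finish along your lines you must either impose this real-form convention explicitly (e.g.\ compatibility with the natural real structure in which Young's orthogonal form acts by real matrices), or else state the answer as the unit sphere of $\C^2$ per nice pair; as written, your argument establishes the latter, which is not the indexing by $(\mathbb T^1)^{n}$ asserted in the proposition. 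In short, you correctly isolated the one nontrivial point — isometry plus $\sN\times SL(2,\C)$-equivariance alone give a sphere, not a circle, per nice pair — but your proposed resolution is not the right one; the reduction to $\mathbb T^1$ comes from the (tacit) choice of real coordinates in the tableau basis of $H^{1,1}$, not from a base embedding with residual phase freedom.
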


\begin{proof}
Obviously, under a Schur--Weyl embedding, for each $k$ we have
$$
\pi_{(n+k-1,n-k)}\otimes M_{2k}\hookrightarrow\pi_{(n+k,n-k+1)}\otimes
M_{2k},
$$
where $\pi_{(n+k-1,n-k)}\hookrightarrow\pi_{(n+k,n-k+1)}$ is an
embedding of the irreducible representation $\pi_{(n+k-1,n-k)}$ of
${\mathfrak S}_{2n-1}$ into the irreducible representation
$\pi_{(n+k+1,n-k+1)}$ of
${\mathfrak S}_{2n+1}$. A similar fact holds in the case of an even $N$.

Obviously,
$H(\la,\mu)$ is the multiplicity space of $\pi_\la$ in $\pi_\mu$,
so that the isometric embeddings
$\pi_\la\hookrightarrow\pi_\mu$ commuting with the
action of ${\mathfrak S}_N$ are indexed by the unit vectors $h\in H(\la,\mu)$.
For $\la=(n+k-1,n-k)$ and $\mu=(n+k,n-k+1)$, the pair $(\la,\mu)$ is
nice, so that the set
$\T(\la,\mu)$ consists of two tableaux: the tableau $t_{21}$ is obtained by
putting the element $2n$ into the second row and the element $2n+1$ into
the first row, while the tableau $t_{12}$ is obtained by putting $2n$ into the first
row and $2n+1$ into the second row. Thus we can {\it identify}
the spaces $H((n+k-1,n-k), (n+k,n-k+1))$ for all $k$. Denote the
obtained space by $H^{1,1}$, and fix the standard basis $\{t_{21},t_{12}\}$ of
$H^{1,1}$. Then the
isometric embeddings $\pi_\la\hookrightarrow\pi_\mu$ commuting with the
action of ${\mathfrak S}_N$ are indexed by the elements of the one-dimensional complex circle
$\mathbb T^1\subset H^{1,1}$.

In the case of even $N$, the situation is exactly the same, with the
only exception: for the diagrams $\la=(n,n)$ and $\mu=(n+1,n+1)$, the
multiplicity space $H(\la,\mu)$ is one-dimensional.
\end{proof}

\medskip\noindent{\bf Remark.} Note that a Schur--Weyl embedding
$(\C^2)^{\otimes N}\hookrightarrow(\C^2)^{\otimes (N+2)}$
does not necessarily preserve the tensor product structure. The
important class of Schur--Weyl embeddings that do have this property is
considered in Section~\ref{product}.

\medskip
The scheme described above has a natural
generalization. Namely, we can consider embeddings that preserve the
Schur--Weyl structure but ``jump'' over
an arbitrary (even) number of levels
instead of two ones.

\begin{definition}\label{generalized}
A {\em generalized Schur--Weyl embedding}
is an isometric embedding $(\C^2)^{\otimes
N}\hookrightarrow(\C^2)^{\otimes (N+2k)}$, $k\ge1$, that is
equivariant with respect to
the actions of $SL(2,\C)$ and ${\mathfrak
S}_{N}$  (with the standard embedding
${\mathfrak S}_{N}\subset{\mathfrak S}_{N+2k}$).
\end{definition}

The theory of Schur--Weyl representations
of the infinite symmetric group $\sinf$ developed below can easily be
extended to the case of generalized Schur--Weyl embeddings. In
particular, generalized Schur--Weyl embeddings are used
in Section~\ref{laplacian} for constructing a representation of $\sinf$
in which the weak limit of the so-called Coxeter--Laplace operators has
an eigenvalue that is arbitrarily close to the maximal possible value.

\section{Infinite-dimensional Schur--Weyl duality}
\label{swduality}

Consider an {\it infinite} chain of Schur--Weyl embeddings
\beq\label{infswemb}
(\C^2)^{\otimes0}\hookrightarrow(\C^2)^{\otimes 2}\hookrightarrow(\C^2)^{\otimes
4}\hookrightarrow{\ldots}\quad\mbox{or}\quad(\C^2)^{\otimes1}\hookrightarrow(\C^2)^{\otimes 3}\hookrightarrow(\C^2)^{\otimes
5}\hookrightarrow{\ldots}
\eeq
(in what follows, these two cases will be referred to as ``even'' and
``odd,'' respectively)
and the corresponding inductive limit $\Pi$ of
representations of the symmetric groups.
In the space $\H$ of this
representation we have commuting actions of the infinite symmetric
group $\sinf$ and the special linear group $SL(2,\C)$. By above, the
representation $\Pi$ is determined by a collection of vectors
$h^{(k)}_j\in {\mathbb T}^{1}$, $k=0,1,{\ldots}$,
$j=0,1,{\ldots}$,
where $h^{(k)}_j$ determines the embedding
$\pi_{(k+j,j)}\hookrightarrow\pi_{(k+j+1,j+1)}$.

\begin{theorem}\label{th:reprstr}
Let $\Pi$ be
the representation of the infinite symmetric
group $\sinf$ that is the inductive limit of the standard
representations of $\sN$ in $(\C^2)^{\otimes N}$ with respect to an infinite
chain~\eqref{infswemb} of Schur--Weyl embeddings. Then $\Pi$
decomposes into a countable direct
sum of primary representations
\beq\label{thdecomp}
\Pi=\sum_{k=0}^\infty\Pi_{k}(h^{(k)})\otimes M_{k+1},
\eeq
where $\Pi_{k}(h^{(k)})$ is the inductive limit of the irreducible
representations of the symmetric groups ${\mathfrak S}_{k},
{\mathfrak S}_{k+2},{\ldots} $ corresponding to
the Young diagrams
$$
(k),\quad(k+1,1),\quad(k+2,2),\;{\ldots},\; (k+n,n),\;{\ldots}
$$
determined by the sequence
$h^{(k)}=(h^{(k)}_0,h^{(k)}_1,h^{(k)}_2,{\ldots})\in
(\mathbb T^1)^\infty$,
and the sum is taken over even $k$ in the even case
and over odd $k$ in the odd case.

The representation
$\Pi_{k}(h^{(k)})$ of $\sinf$ is irreducible.
\end{theorem}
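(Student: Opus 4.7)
The plan is to split the statement into two parts: the decomposition \eqref{thdecomp} and the irreducibility of each $\Pi_{k}(h^{(k)})$.

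For the decomposition, I would combine the finite-level Schur--Weyl formulas \eqref{finodd}, \eqref{fineven} with the explicit structure of the embeddings from Proposition~\ref{embed}. Writing $V_N$ for the image of $(\C^2)^{\otimes N}$ in $\H$, a Schur--Weyl embedding $V_N\hookrightarrow V_{N+2}$ acts diagonally on the isotypic summands: on the $k$-th summand it has the form $\varphi\otimes\mathrm{id}_{M_{k+1}}$, where $\varphi\colon\pi_{(k+j,j)}\hookrightarrow\pi_{(k+j+1,j+1)}$ is the $\sN$-equivariant isometric embedding encoded by $h^{(k)}_j$, and the identity on $M_{k+1}$ is forced by Schur's lemma (an $SL(2,\C)$-equivariant map between isomorphic copies of an irreducible is unique up to scalar). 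Since sequential inductive limits commute with finite direct sums, passing to the limit yields \eqref{thdecomp} with $\Pi_{k}(h^{(k)})=\varinjlim_{j}\pi_{(k+j,j)}$ under the inductive system determined by $h^{(k)}$.

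For the irreducibility of $\Pi_{k}(h^{(k)})$, my strategy is to establish the stronger statement that the von Neumann algebras generated by $\sinf$ and $SL(2,\C)$ on $\H$ are each other's commutants. Granted this, \eqref{thdecomp} is the joint isotypic decomposition for $\sinf\times SL(2,\C)$, and since each $M_{k+1}$ is $SL(2,\C)$-irreducible, the multiplicity space $\Pi_{k}(h^{(k)})$ is forced to be $\sinf$-irreducible. One inclusion of commutants is immediate from the commutation of the two actions. For the reverse inclusion, let $T\in\End(\H)$ be bounded and commute with $\sinf$, and let $P_N\colon\H\to V_N$ be the orthogonal projection. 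Since $V_N$ is $\sN$-invariant, $P_N$ commutes with $\sN$, so $P_N T P_N$ is an $\sN$-intertwiner of $V_N$; by finite Schur--Weyl duality it lies in the algebra generated by the $SL(2,\C)$-action on $V_N$. Equivariance of the Schur--Weyl embeddings means that these finite actions are compressions of a single global $SL(2,\C)$-action on $\H$, and a weak-operator approximation argument based on density of $\bigcup_N V_N$ should identify $T$ with an element of the von Neumann algebra generated by $SL(2,\C)$.

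The main obstacle will be this final approximation step: $T$ does not preserve any $V_N$, and the compressions $P_N T P_N$ have to be shown to stabilize coherently to a single element of the $SL(2,\C)$-algebra acting as $T$. Extracting such a limit (in, say, the weak operator topology) and verifying that the resulting global operator agrees with $T$ on $\bigcup_N V_N$ is the technical heart of the argument. A cleaner alternative, if Theorem~\ref{th:Bernoulli} can be invoked at this stage, is to deduce $\sinf$-irreducibility of $\Pi_{k}(h^{(k)})$ directly from ergodicity of the associated Bernoulli-like spectral measure on the space of infinite Young tableaux under the tail equivalence relation of $\sinf$; this transports irreducibility to a purely measure-theoretic statement via the Gelfand--Tsetlin spectral decomposition.
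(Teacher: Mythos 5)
Your first paragraph (the decomposition) is essentially the paper's own argument: the paper's proof of Theorem~\ref{th:reprstr} is just ``follows from the previous considerations,'' meaning exactly the combination of \eqref{finodd}--\eqref{fineven} with Proposition~\ref{embed} (each Schur--Weyl embedding acts blockwise as $\varphi\otimes\mathrm{id}_{M_{k+1}}$ with $\varphi$ the $\sN$-equivariant isometry encoded by $h^{(k)}_j$) and passage to the inductive limit. That part is fine.

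The irreducibility of $\Pi_k(h^{(k)})$, however, is left with a genuine gap. Your route is to prove the full mutual-commutant statement first, and the step you yourself flag as the ``technical heart'' is exactly the step that does not go through as described: knowing that each compression $P_NTP_N$ lies in the algebra generated by $SL(2,\C)$ \emph{on} $V_N$ does not place the operators $P_NTP_N$ (extended by zero) in the global von Neumann algebra generated by $SL(2,\C)$ on $\H$, because the projections $P_N$ lie in the commutant of $SL(2,\C)$, not in its algebra; moreover, since $T$ is only assumed to commute with $\sinf$, you cannot yet say that $T$ preserves the $SL(2,\C)$-isotypic blocks, so there is no obvious way to make the finite-level elements cohere into one global element of $SL(2,\C)''$. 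Your fallback via Theorem~\ref{th:Bernoulli} is also problematic relative to the paper's logic: Remark~1 after that theorem \emph{deduces} ergodicity of $\mu^{(k)}$ from irreducibility, so using ergodicity to get irreducibility would be circular unless you prove ergodicity independently. The good news is that no commutant theorem is needed: $\Pi_k(h^{(k)})$ is by construction an inductive limit of finite-dimensional \emph{irreducible} representations $\pi_{(k+j,j)}$ of ${\mathfrak S}_{k+2j}$ along equivariant isometries. If $T$ is bounded and commutes with $\sinf$, then for each $j$ the compression $Q_jTQ_j$ (with $Q_j$ the projection onto the image of $\pi_{(k+j,j)}$) commutes with ${\mathfrak S}_{k+2j}$, hence by finite-dimensional Schur's lemma equals $c_j Q_j$; compressing the $(j+1)$st level to the $j$th gives $c_{j+1}=c_j$, and density of $\bigcup_j \pi_{(k+j,j)}$ forces $T=cE$. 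This two-line argument both closes your gap and yields the mutual-commutant statement afterwards, rather than before, since on each block the commutant of $\sinf$ is then $1\otimes\End(M_{k+1})$ and the $M_{k+1}$ are pairwise inequivalent.
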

\begin{proof}
Follows from the previous considerations.
\end{proof}

\begin{definition}
The representation $\Pi$ of the infinite symmetric group
will be called the Schur--Weyl representation of $\sinf$ determined by
the sequence of Schur--Weyl embeddings with parameters $h^{(k)}_j\in
\mathbb T^1$, $k=0,1,{\ldots}$, $j=0,1,{\ldots}$. Representations
of the form $\Pi_{k}(h^{(k)})$ will be called irreducible Schur--Weyl
representations.
\end{definition}

Thus, if we are interested in the representation theory of the
infinite symmetric group, it suffices to study the irreducible Schur--Weyl
representations $\Pi_k(h)$
for $h=(h_0,h_1,h_2,{\ldots})\in
(\mathbb T^1)^\infty$.

An important class of irreducible Schur--Weyl representations consists of
representations determined by sequences of Schur--Weyl embeddings that
are homogeneous in $N$, as defined below.

\begin{definition}\label{def:stat}
An irreducible Schur--Weyl
representation determined by a sequence $h=(h_0,h_1,h_2,{\ldots})\in
(\mathbb T^1)^\infty$ is called a stationary Schur--Weyl representation
if all $h_k$ coincide with the same vector of $\mathbb T^1$.
\end{definition}

Denote
\begin{eqnarray*}
\Todd=\{\tau=(\tau^{(1)},\tau^{(3)},\tau^{(5)},{\ldots} ):
\tau_n\in\Y_n^2,\;\tau^{(n)}\subset\tau^{(n+2)}\mbox{ for all }n,
\\
(\tau^{(n)},\tau^{(n+2)})\mbox{ is nice for sufficiently large }n\}.
\end{eqnarray*}
Thus $\Todd$ is the set of restrictions of two-row Young tableaux (regarded as
sequences of Young diagrams) to the {\it odd} levels such that for
sufficiently large $n$, the $(n+2)$th diagram is obtained from the
$n$th one by adding one cell to each row.

By definition, for $\tau\in\Todd$ with
$\tau^{(n)}=(\tau^{(n)}_1,\tau^{(n)}_2)$, the
sequence $\tau^{(n)}_1-\tau^{(n)}_2$
stabilizes to some value $k=1,3,5,{\ldots} $:
$\tau^{(n)}_1-\tau^{(n)}_2=k$ for sufficiently large $n$. Denoting the
corresponding subset of $\Todd$ by $\Todd_k$, we have
$$
\Todd=\bigcup_{j=1}^\infty\Todd_{2j-1}.
$$
The sets $\Todd_k$ and $\Todd$ are, obviously, countable. The set
$\Todd_k$ consists of tail-equivalent sequences.

Let $h=(h_0,h_1,{\ldots} )$ be the sequence that determines the
irreducible Schur--Weyl
representation $\Pi_k(h)$ under study; recall that $h_j$ determines the embedding
$\pi_{(k+j,j)}\hookrightarrow\pi_{(k+j+1,j+1)}$.
Consider an arbitrary
$\tau\in\Todd$. By the definition of $\Todd$, for sufficiently
large $j$, the pair $(\tau_{k+2j},\tau_{k+2j+2})$ is nice, so that we
may identify $H(\tau_{k+2j},\tau_{k+2j+2})$ with $H^{1,1}$ and assume that $h_{j}\in H(\tau_{k+2j},\tau_{k+2j+2})$. Let
$H_\tau^h$ be the {\it incomplete tensor product} \cite{vN}
of the spaces $H(\tau_{n},\tau_{n+2})$ determined by $h$, that is,
the completion of the set of all finite linear
combinations of simple tensor vectors $\bigotimes_{j=0}^\infty
v_{2j+1}$ with $v_n\in H(\tau_{n},\tau_{n+2})$ such that all but
finitely many of $v_{n}$
coincide with $h_{n}$.

In the even case, the argument is the same, with the space $\Teven$,
defined in a similar way,
in place of $\Todd$. However, in this case
we have an exceptional representation
$\Pi_0$ (which does not depend on $h$), which is
the inductive limit of the
representations with Young diagrams $(n,n)$.
This is the so-called ``discrete'' elementary representation
$D_{t_0}$, which is realized in the $l^2$
space spanned by all infinite Young tableaux tail-equivalent to the
``principal''
tableau $t_0$ with $1,3,5,{\ldots} $ in the first row and $2,4,6,{\ldots} $
in the second row; that is, the representation whose spectral measure
with respect to the Gelfand--Tsetlin subalgebra
is $\de_{t_0}$.

Summarizing the above discussion, we obtain the following proposition.

\begin{proposition}
Let $k\ge1$. Then, denoting by $H_k(h)$ the space of the
representation $\Pi_k(h)$, we have
$$
H_k(h)=\bigoplus_{\tau\in\Todd_k}H_\tau^h
$$
(however, the subspaces $H_\tau^h$ are not invariant under the action of
$\sinf$).
\end{proposition}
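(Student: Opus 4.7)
The plan is to construct the decomposition at each finite level $\pi_{(k+j,j)}$ by organizing its basis of Young tableaux according to their odd-level projections, and then to pass to the inductive limit, identifying each direct summand with an incomplete tensor product in the sense of von Neumann.

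First I would iterate the two-step branching $\mathfrak{S}_{n+2}\supset\mathfrak{S}_n$ along odd levels to obtain a vector-space decomposition
\begin{equation*}
\pi_{(k+j,j)}\cong\bigoplus_{\sigma}\bigotimes_{n=1,3,\ldots,k+2j-2}H(\sigma^{(n)},\sigma^{(n+2)}),
\end{equation*}
where $\sigma=(\sigma^{(1)}\subset\sigma^{(3)}\subset\cdots\subset\sigma^{(k+2j)}=(k+j,j))$ runs over odd-level segments of two-row tableaux. Each such $\sigma$ extends uniquely to some $\tau\in\Todd_k$ by the nice-tail rule (adding one cell to each row at each subsequent odd step), and conversely every $\tau\in\Todd_k$ arises this way from its initial segment up to any level $k+2j$ with $j$ large enough. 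Writing $H_\tau^{(k+2j)}:=\bigotimes_{n=1,3,\ldots,k+2j-2}H(\tau^{(n)},\tau^{(n+2)})$, the decomposition rewrites as
\begin{equation*}
\pi_{(k+j,j)}\cong\bigoplus_{\tau\in\Todd_k,\,\tau^{(k+2j)}=(k+j,j)}H_\tau^{(k+2j)}.
\end{equation*}

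Next I would verify that under these identifications the Schur--Weyl embedding $\pi_{(k+j,j)}\hookrightarrow\pi_{(k+j+1,j+1)}$ acts on each summand by $v\mapsto v\otimes h_j$, where $h_j$ is the distinguished unit vector identified with its image in the fixed space $H^{1,1}$ via the canonical basis $\{t_{21},t_{12}\}$ as in the proof of Proposition~\ref{embed}. Granting this, for each fixed $\tau$ the directed system $\{H_\tau^{(k+2j)}\}_{j\geq j_\tau}$ with transitions $v\mapsto v\otimes h_j$ has inductive limit (completed) equal, by von Neumann's construction, to the incomplete tensor product $H_\tau^h$ with distinguished vectors $h_j$. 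Since for each $\tau\in\Todd_k$ the summand $H_\tau^{(k+2j)}$ eventually appears in every finite decomposition, the direct sum over $\tau$ commutes with the inductive limit, yielding $H_k(h)=\bigoplus_{\tau\in\Todd_k}H_\tau^h$. The non-invariance remark is immediate, since an adjacent transposition $s_i\in\sinf$ generally sends $v_t$ to a combination of $v_{t'}$'s whose odd-level projections differ from that of $t$.

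The main obstacle is the compatibility check just invoked. Proposition~\ref{embed} provides $h_j\in H^{1,1}$ as the single datum determining the embedding $\pi_{(k+j,j)}\hookrightarrow\pi_{(k+j+1,j+1)}$, but one has to trace through the iterated branching and check that this \emph{same} vector $h_j$ controls the top multiplicity factor on every $\tau$-summand simultaneously. This reduces to observing that the canonical identification of $H((k+j,j),(k+j+1,j+1))$ with $H^{1,1}$ is independent of the preceding initial segment of $\tau$; but this is the unique place where the definitions actually have to interlock across summands.
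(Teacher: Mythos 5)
Your argument is essentially the paper's own: the paper offers no detailed proof (it merely ``summarizes the above discussion''), and that discussion consists precisely of identifying the nice-pair multiplicity spaces with $H^{1,1}$ and defining $H_\tau^h$ as von Neumann incomplete tensor products, which is what you reconstruct via iterated branching and the transition maps $v\mapsto v\otimes h_j$. The only blemish is notational: at a finite level the summands are indexed by odd-level chains ending at $(k+j,j)$, i.e.\ by initial segments of elements of $\Todd_k$, not by the set $\{\tau\in\Todd_k:\tau^{(k+2j)}=(k+j,j)\}$, which over-counts since distinct $\tau$ may share such an initial segment; this does not affect your passage to the limit.
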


\section{Spectral measures of Schur--Weyl representations}
\label{tableaux}

Now we want to  construct a realization of the representation
$\Pi_k(h)$, $k\ge1$,
in the space $L^2(\T,\nu)$ for some measure $\nu$ on the space of all infinite
Young tableaux $\T$.

We have $h_j\in H(\la^{(j)},\la^{(j+1)})$, where $\la^{(j)}=(k+j,j)$.
Let $h_j=p_jt_{21}+q_jt_{12}$, where $\{t_{21},t_{12}\}$ is the standard
tableaux basis of $H(\la^{(j)},\la^{(j+1)})\simeq H^{(1,1)}$. By
unitarity, $p_j^2+q_j^2=1$.

Let $\Tproper$ be the set of finite Young tableaux $t\in\T_N$,
$N\in\mathbb N$, such that
the pair $([t]_{N-2},t)$ is not nice (that is, $N-1$ and $N$ lie in
the same row of $t$), and let
$\Tproper_k=\{t\in\Tproper:t\in\T_N(\la)\text{ with }\la_1-\la_2=k\}$.
Given
$t\in\Tproper_k$, consider the measure $\mu_t^h$ on $\T$ that is the
distribution of the following random walk on $\T$: we start from
$t\in\T_N$, and at each step, passing from the $n$th level to the
$(n+2)$th level, we choose the path corresponding to $t_{21}$ with
probability $\al_n=p_j^2$ and the path corresponding to $t_{12}$ with
probability $\beta_n=q_j^2$, where $j=[(n-k)/2]$.

\medskip
Let
$$
\mu^{(k)}=\sum_{t\in\Tproper_k}\mu_t.
$$

\begin{theorem}\label{th:Bernoulli}
The representation $\Pi_k(h)$ has a simple spectrum with respect to
the Gelfand--Zetlin algebra, and
\beq\label{isom}
\Pi_k(h)\simeq L^2(\T,\mu^{(k)}).
\eeq
\end{theorem}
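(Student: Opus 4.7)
The plan is to build an explicit unitary $U\colon\Pi_k(h)\to L^2(\T,\mu^{(k)})$, from which both claims of the theorem can be read off. The structural observation I would rely on is that every standard Young tableau $t$ appearing in the inductive chain admits a canonical \emph{last proper ancestor} $t_\ast(t)\in\Tproper_k$: namely, $[t]_{N_\ast}$ with $N_\ast$ maximal such that $[t]_{N_\ast}\in\Tproper$. Such an $N_\ast$ always exists, because any tableau $t$ showing up in $\Pi_k(h)$ has $[t]_k$ of one-row shape $(k)$, and this initial segment is proper (with the convention for $k=1$). Consequently the finite tableaux in the chain partition into disjoint sectors indexed by $t_0\in\Tproper_k$.

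For each $t_0\in\Tproper_k\cap\T_{N_0}(\la_0)$, let $V_{t_0}\subset\Pi_k(h)$ be the closed span of the Young basis vectors $e_t$ for all finite tableaux $t$ extending $t_0$ via nice steps. I would define
$$
U(e_t)\;=\;\frac{\mathbf{1}_{C_{t_0,t}}}{\sqrt{\mu_{t_0}^h(C_{t_0,t})}},\qquad C_{t_0,t}=\{T\in\T:[T]_N=t\},
$$
where $N$ is the level of $t$. In particular $U(e_{t_0})=\mathbf{1}_{D_{t_0}}$, with $D_{t_0}=\{T:t_\ast(T)=t_0\}$, and $\|\mathbf{1}_{D_{t_0}}\|^2_{L^2(\mu^{(k)})}=\mu_{t_0}^h(D_{t_0})=1$ because the measures $\mu_{t_0}^h$ have pairwise disjoint supports. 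The key consistency to verify is compatibility with the Schur--Weyl embedding: since $h_j=p_jt_{21}+q_jt_{12}$ gives $e_t\mapsto p_je_{t\cup t_{21}}+q_je_{t\cup t_{12}}$, and since the Markov step yields $\mu_{t_0}^h(C_{t_0,t\cup t_{21}})=p_j^2\mu_{t_0}^h(C_{t_0,t})$ (and analogously for $t_{12}$), a direct check shows that the proposed formula for $U(e_t)$ equals $p_jU(e_{t\cup t_{21}})+q_jU(e_{t\cup t_{12}})$. Hence $U$ is well defined on the direct limit and maps $V_{t_0}$ isometrically onto $L^2(D_{t_0},\mu_{t_0}^h)$.

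It remains to assemble these pieces and to extract the spectral statement. Orthogonality $V_{t_0}\perp V_{t_0'}$ for $t_0\neq t_0'$ reduces to the observation that a single finite tableau cannot extend two distinct proper ancestors by nice steps alone (the one at the earlier level would have a nice step at its terminal pair, contradicting its own properness); totality $\Pi_k(h)=\bigoplus_{t_0\in\Tproper_k}V_{t_0}$ holds because every finite Young basis vector $e_t$ in the chain lies in $V_{t_\ast(t)}$. On the $L^2$ side the sets $D_{t_0}$ form a $\mu^{(k)}$-a.e.\ partition of $\T$, giving $L^2(\T,\mu^{(k)})=\bigoplus_{t_0}L^2(D_{t_0},\mu_{t_0}^h)$. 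Simplicity of the Gelfand--Tsetlin spectrum then follows at once: the spectral projection onto the $t$-eigenspace of the level-$N$ Gelfand--Tsetlin subalgebra corresponds under $U$ to multiplication by $\mathbf{1}_{\{[T]_N=t\}}$, and the $*$-algebra generated by such indicators is $L^\infty(\T,\mu^{(k)})$, a maximal abelian subalgebra of $B(L^2(\T,\mu^{(k)}))$.

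The main technical obstacle will be the bookkeeping dictated by the $\sigma$-finiteness of $\mu^{(k)}$: the tempting formula $e_t\mapsto\mathbf{1}_{C_t}$ fails because $\mu^{(k)}(C_t)=\infty$ as soon as $t$ admits infinitely many proper refinements. Partitioning along $t\mapsto t_\ast(t)$ and normalizing by the cylinder masses $\mu_{t_0}^h(C_{t_0,t})$ relative to a fixed proper anchor is the device that keeps all norms finite and that makes $U$ intertwine the Schur--Weyl embedding coefficients $p_j,q_j$ with the Markov transition probabilities $\al_n,\beta_n$.
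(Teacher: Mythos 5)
Your argument is, in substance, the paper's own proof: your $U(e_t)$ coincides with the function $\phi_t$ that the paper defines recursively (a fresh indicator at each proper tableau, division by $p_j$ or $q_j$ at each nice step), your sectors indexed by the last proper ancestor are implicit in that recursion and in the decomposition $H_k(h)=\bigoplus_\tau H^h_\tau$, and your maximal-abelian argument for simplicity of the Gelfand--Tsetlin spectrum makes explicit what the paper leaves tacit. One slip should be repaired for the construction to parse: as displayed, $C_{t_0,t}=\{T\in\T:[T]_N=t\}$ is the absolute cylinder, whose $\mu^{(k)}$-measure is infinite (each of the infinitely many proper tableaux $t'\in\Tproper_k$ extending $t$ contributes mass one to it), so $\mathbf{1}_{C_{t_0,t}}\notin L^2(\T,\mu^{(k)})$, and the consistency identity $U(e_t)=p_jU(e_{t\cup t_{21}})+q_jU(e_{t\cup t_{12}})$ fails because the two nice subcylinders do not exhaust $C_{t_0,t}$ (non-nice continuations of $t$ also carry positive mass). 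What you clearly intend --- and what makes your subsequent assertions true, e.g.\ $U(e_{t_0})=\mathbf{1}_{D_{t_0}}$ and the isometry onto $L^2(D_{t_0},\mu^h_{t_0})$ --- is the relative cylinder $C_{t_0,t}=\{T:[T]_N=t\}\cap D_{t_0}$, i.e.\ the cylinder intersected with the support of $\mu^h_{t_0}$; with that reading your verification goes through, and the same reading must in fact be given to the indicators $\de_t$ in the paper's proof, whose claim $\|\de_t\|=1$ is literally false otherwise. Two minor further points: for negative or complex $p_j$ (the tensor embedding has $p<0$) your positively normalized indicators need the phase factor $\prod_j c_j/|c_j|$, where $c_j\in\{p_j,q_j\}$ records the choices along the nice steps, or one can simply set $U(e_t)=\bigl(\prod_j c_j^{-1}\bigr)\mathbf{1}_{C_{t_0,t}}$ as the paper's recursion does; and it is not true that every $t$ in the chain has $[t]_k$ of one-row shape --- the anchor exists because either $t$ has a non-nice step (take the last one, which then has row difference $k$ and is proper) or all its steps are nice, and only in that case is $[t]_k$ forced to be the one-row tableau $(k)$.
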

\begin{proof}
The norm in $L^2(\T,\mu^{(k)})$
will be denoted by $\|\cdot\|$.
By definition, $\Pi_k(h)$ is an inductive limit of irreducible
representations $\pi_{(k)}$, $\pi_{(k+1,1)}$, $\pi_{(k+2,2)}$,{\ldots}.
The representation $\pi_{(k)}$ is
one-dimensional; to the only tableau $t$ with diagram
$(k)$  we associate the function $\phi_t=\de_t$, where $\de_t$ is the cylinder
function in $L^2(\T,\mu^{(k)})$ such that $\de_t(s)=1$ if $[s]_k=t$
and $\de_t(s)=0$ otherwise. Obviously, $\|\de_t\|$=1.
Now assume that we constructed functions
$\phi_t\in L^2(\T,\mu^{(i)})$ for all $t$ with diagram $(k+l,l)$.
Denoting $N=k+2l$ and
considering the restriction of the irreducible representation
$\pi=\pi_{(k+l+1,l+1)}$ of ${\mathfrak S}_{N+2}$ to ${\mathfrak
S}_{N}$, we have $\pi=\pi_{(k+l,l)}\oplus\pi'$, where
$\pi'=\pi_{(k+l+1,l-1)}+\pi_{(k+l-1,l+1)}$, the latter term missing if
$k<2$. Now let $t\in\T_{N+2}((k+l+1,l+1))$. If $[t]_N\in\T_{N}(\la)$ with
$\la=(k+l+1,l-1)$ or
$\la=(k+l-1,l+1)$, then $t\in\Tproper_k$ and we put $\phi_t=\de_t$. If
$[t]_N\in\T_N((k+l,l))$, then $\phi_{[t]_N}$ is already constructed
and we put
$$
\phi_t=\phi_{[t]_N}\cdot\begin{cases}
\frac1p\de_t&\mbox{if $N+1$ lies in the second row in $t$},\\
\frac1q\de_t&\mbox{if $N+1$ lies in the first row in $t$}.
\end{cases}
$$
It is not difficult to verify that the map $t\mapsto\phi_t$ thus defined
is an isometry $\pi_{(k+l,l)}\to L^2(\T,\mu^{(k)})$ and these maps
agree with the embeddings $\pi_{(k+l,l)}\hookrightarrow\pi_{(k+l+1,l+1)}$.
\end{proof}

In particular, for a stationary Schur--Weyl representation (see
Definition~\ref{def:stat}), we have
 $\al_j\equiv\al$, $\beta_j\equiv\beta$ for all $j$.
Thus a stationary Schur--Weyl representation is determined by a number
$p\in[-1,1]$, where $h_j=pt_{21}+qt_{12}$ and $q=\sqrt{1-p^2}$
(it suffices to consider only positive $q$, because the embeddings
determined by parameters $(p,q)$ and $(-p,-q)$ are obviously
equivalent). The corresponding measure $\mu^{(k)}$ on the space of
infinite Young tableaux is the distribution of a stationary
(``Bernoulli'') random walk.

\medskip\noindent{\bf Example 1.}
If each $h_j$ coincides with $t_{21}$ (or with $t_{21}$), i.e.,
$p\in\{0,1\}$, then the
measure $\mu^{(k)}$ is the $\de$-measure at an infinite Young tableau, and
the corresponding representation $\Pi_k(h)$ is the
discrete elementary representation of $\sinf$ associated with this
tableau.

\medskip\noindent{\bf Remark 1.} The measure $\mu^{(k)}$ on
the set of infinite Young tableaux is not central. It is $\si$-finite,
continuous, and ergodic with respect to the tail equivalence relation
on partitions (since the corresponding representation is irreducible); the representation under study acts in the scalar $L^2$
space over this measure.

\smallskip\noindent
{\bf Remark 2.} The action of the infinite symmetric group $\sinf$ in
the space $L^2(\T,\mu^{(k)})$ providing the isomorphism~\eqref{isom}
does not coincide with the standard action of permutations on Young
tableaux determined by Young's
orthogonal form.

\section{The main example: tensor Schur--Weyl representations}
\label{product}

There is a distinguished Schur--Weyl embedding that agrees with the
tensor product structure of the space $(\C^2)^{\otimes N}$. Namely,
observe that $\C\otimes\C=M_1\oplus
M_3$ as $SL(2,\C)$-modules, where $M_3$ is the three-dimensional irreducible $SL(2,\C)$-module
and $M_1$ is the trivial one-dimensional $SL(2,\C)$-module. Thus we may
embed $(\C^2)^{\otimes N}$ into
$$
(\C^2)^{\otimes(N+2)}=(\C^2)^{\otimes N}\otimes(\C^2)^{\otimes2}=
(\C^2)^{\otimes N}\otimes(M_1\oplus M_3)
$$
along this one-dimensional representation.

In other words, this is the unique
Schur--Weyl embedding that has the form
$$
(\C^2)^{\otimes N}\ni
v\mapsto v\otimes v_0\in(\C^2)^{\otimes(N+2)}
$$ for some $v_0\in\C^2$. It is easy to see
that, denoting the standard basis of $\C^2$ by
$\{e_1,e_2\}$, we have
$v_0=\frac1{\sqrt2}(e_1\otimes e_2-e_2\otimes e_1)$; this is the
unique (up to a constant)
$SL(2,\C)$-invariant vector in $\C^4$.

\begin{definition}
The embedding described above will be called the tensor Schur--Weyl
embedding.
\end{definition}

Just another way to describe the tensor Schur--Weyl embedding is to say that
the image of $(\C^2)^{\otimes N}$ in $(\C^2)^{\otimes(N+2)}$ should
lie in the eigenspace of
the last Coxeter transposition $\si_{N+1}=(N+1,N+2)$ with eigenvalue $-1$.

\begin{proposition}
For the tensor Schur--Weyl embedding described above, we have
$$
p=-\sqrt{\frac{r-1}{2r}},\qquad q=\sqrt{\frac{r+1}{2r}},
$$
so that the
corresponding Bernoulli measure $\mu_t$ has the weights
$$
\al=\frac{{r-1}}{{2r}}, \qquad\beta=\frac{{r+1}}{{2r}};
$$
here $r=k+1$ for $t\in\Tproper_k$.
\end{proposition}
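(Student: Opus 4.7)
The plan is to use the characterization stated just before the proposition: the tensor Schur--Weyl embedding is the unique Schur--Weyl embedding whose image lies in the $(-1)$-eigenspace of the Coxeter transposition $\si_{N+1}=(N+1,N+2)$. Since $\si_{N+1}$ commutes with the subgroup $\sn\subset{\mathfrak S}_{N+2}$, by Schur's lemma it acts on the decomposition $\pi_\mu|_{\sn}=\bigoplus_\la H(\la,\mu)\otimes\pi_\la$ as $T_\la\otimes\mathrm{Id}$ for some operator $T_\la\in\End(H(\la,\mu))$. Thus the parameter $h=pt_{21}+qt_{12}\in H^{1,1}$ that describes the tensor embedding on the $k$-isotypic component is precisely a unit $(-1)$-eigenvector of $T_\la$, and the whole problem is reduced to diagonalizing a $2\times2$ matrix.

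Next I would compute $T_\la$ explicitly from Young's orthogonal form in the basis $\{t_{21},t_{12}\}$, taking $\la=(k+j,j)$ and $\mu=(k+j+1,j+1)$. The new cells of $\mu$ are $(1,k+j+1)$ and $(2,j+1)$, of contents $k+j$ and $j-1$ respectively, so the axial distance from the cell of $N+1$ to that of $N+2$ equals $r=k+1$ in the tableau $t_{21}$ and $-r$ in $t_{12}$. Young's orthogonal form then gives
$$
\si_{N+1}t_{21}=\frac{1}{r}t_{21}+\frac{\sqrt{r^2-1}}{r}t_{12},\qquad
\si_{N+1}t_{12}=-\frac{1}{r}t_{12}+\frac{\sqrt{r^2-1}}{r}t_{21}.
$$
Note that these formulas depend only on $k$ and not on $j$, which is exactly the consistency condition needed to identify all the two-dimensional spaces $H(\la,\mu)$ (for varying $j$ with $k$ fixed) with the single space $H^{1,1}$, as was done in Section~\ref{swembeddings}.

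Finally I would solve $(\si_{N+1}+\mathrm{Id})(pt_{21}+qt_{12})=0$. Both components of this vector equation collapse to the single linear relation $p\sqrt{r+1}+q\sqrt{r-1}=0$; combined with the normalization $p^2+q^2=1$ this yields $q=\sqrt{(r+1)/(2r)}$ and $p=-\sqrt{(r-1)/(2r)}$, from which $\al=p^2=(r-1)/(2r)$ and $\beta=q^2=(r+1)/(2r)$. The one point needing care is the sign convention: the sign of $p$ relative to $q$ is fixed by the eigenvector equation (not merely by normalization), and one must be careful about the sign convention in axial distances so that the resulting matrix has the correct $\pm1$ spectrum. Beyond this minor bookkeeping I do not anticipate a real obstacle, since the argument is essentially a Schur-lemma reduction followed by one explicit diagonalization.
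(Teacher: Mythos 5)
Your proposal is correct and follows essentially the same route as the paper: characterize the tensor embedding by the $(-1)$-eigenspace of $\si_{N+1}$, write its matrix on $H(\la,\mu)$ in the basis $\{t_{21},t_{12}\}$ via Young's orthogonal form with axial distance $r=k+1$, and solve the resulting $2\times2$ eigenvector problem. The extra Schur-lemma justification and the explicit check that both rows give the single relation $p\sqrt{r+1}+q\sqrt{r-1}=0$ are just details the paper leaves implicit.
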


\begin{proof}
Let $\la=(k+n,n)$, $\mu=(k+n+1,n+1)$, and $h\in H(\la,\mu)$
be the vector determining the tensor Schur--Weyl embedding in
these components, so that $h=pt_{21}+qt_{12}$.
Recall that Young's orthogonal form (see, e.g., \cite{James}) says
that the Coxeter transposition $\si_{N+1}$, where $N=2n+k$, acts in
$H(\la,\mu)$ and has the following matrix in the basis
$\{t_{21},t_{12}\}$:
$$
\begin{pmatrix}
r^{-1}&\sqrt{1-r^{-2}}\\
\sqrt{1-r^{-2}}&-r^{-1},
\end{pmatrix}
$$
where the axial distance $r$ (defined as $c_{N+2}-c_{N+1}$, where
$c_j$ is the content of the cell containing $j$) in our case is equal
to $k+1$. The proposition now follows from the fact that the image of
$(\C^2)^{\otimes N}$ in $(\C^2)^{\otimes(N+2)}$ should
lie in the eigenspace of
$\si_{N+1}=(N+1,N+2)$ with eigenvalue $-1$.
\end{proof}

Obviously, the tensor Schur--Weyl representation can be realized in the incomplete tensor product
$$
(\C^4)^{\otimes\infty}_{v_0},\quad
v_0=\frac1{\sqrt2}(e_1\otimes e_2-e_2\otimes e_1)\in\C^4.
$$
It follows that in the space of the tensor Schur--Weyl representation
we have also an action of the UHF algebra ${\cal
G}=\varinjlim\operatorname{Mat}_{4^n}(\C)$ (the Glimm algebra of type~$2^\infty$; see, e.g., \cite{Pedersen}),
thus obtaining a new representation of this algebra.
One can show that this is the only Schur--Weyl representation with
this property.

There is a natural question: given a vector
$w\in(\C^4)^{\otimes\infty}_{v_0}$, determine in what primary component
${\cal H}_k=\Pi_k\otimes M_{k+1}$ of the
corresponding ``spin'' decomposition~\eqref{thdecomp} it lies.
The answer is as
follows. By the definition of an incomplete tensor product, we have
$w=u\otimes v_0\otimes v_0\otimes{\ldots} $, where
$u\in(\C^4)^{\otimes N}$ for some finite $N$. Then $w$ has the same
spin $k$ as the finite vector $u$ does according to
decomposition~\eqref{finodd} or~\eqref{fineven}. To find this $k$, one
may write $(\C^4)^{\otimes N}$ as $(\C^2)^{\otimes 2N}$ and then use
the results on tensor representations of the finite symmetric groups
from \cite[Section~5]{markov}.

In the class of generalized Schur--Weyl
representations (see Definition~\ref{generalized}) there are many
other tensor representations. Namely, the ``jump factor'' $(\C^2)^{\otimes k}$
contains the one-dimensional representation $M_1$ of $SL(2,\C)$ with
multiplicity equal to the Catalan number $C_k$. Choosing any vector
from this $C_k$-dimensional $SL(2,\C)$-invariant subspace as $v_0$ (in
general, we may choose
different vectors at different steps) and
constructing the corresponding incomplete tensor product, we obtain a
tensor generalized Schur--Weyl
representation.

\section{The Coxeter--Laplace operator in Schur--Weyl representations}
\label{laplacian}

In this section we study the so-called Coxeter--Laplace operator in
Schur--Weyl representations. But first let us briefly describe the general
setting.

In the general theory of random walks,
the Laplacian of the random walk on a finite group $G$ with probability
measure $\mu$ is the operator $E-\sum_g \mu(g)L_g$
in the group algebra of $G$ (or in
the space where a representation of $G$ is defined),
where the sum is over all
$g\in G$ with $\mu(g)>0$, $L_g$ is the operator of left
multiplication by $g$, and $E$ is the identity operator.
Let $G$ be the symmetric
group ${\frak S}_N$ and $\mu$ be the uniform measure
on the set of Coxeter generators $\sigma_k=(k,k+1)$, $k=1, \ldots, N$, where we
set $\sigma_N=(N,1)$. The
corresponding Laplacian has the form
$$
L_N=e-\frac1N\sum_{k=1}^N \sigma_k.
$$
We call it the {\it periodic Coxeter
Laplacian}, or the {\it Coxeter--Laplace operator}.
It follows from the Schur--Weyl duality
that if $\pi_N$ is the representation of
$\sN$
in the tensor product $(\C^2)^{\otimes N}$ by permutations of factors,
then the operator $\pi_N(L_N)$ is related to the Hamiltonian of the XXX
Heisenberg model on the periodic one-dimensional lattice with $N$
sites (see, e.g., \cite{Iz,FT}) by the formula $H=\frac J4(2L-N)$,
where $J>0$ corresponds to
the ferromagnetic case, and
$J<0$ corresponds to the antiferromagnetic case
(see also \cite{ferro, MTV}).

Thus we have the following natural problem.
For a given (irreducible or not) representation of the
symmetric group, find the eigenvalues and eigenfunctions of the Coxeter
Laplacian. Usually, one considers the spectrum of operators in $L^2(G)$.
In our case,
it is natural, both from the point of view of representation theory
and applications to physics,
to consider the asymptotics of the Coxeter Laplacian $L_N$ and its
spectrum as $N\to\infty$.
The case most important for applications (in particular, for the
Heisenberg model) is that
of representations of ${\frak S}_N$ corresponding to Young diagrams with
finitely many rows; e.g., at most two rows.
Besides, there are two different asymptotic modes,
``ferromagnetic'' mode and ``antiferromagnetic'' mode. The first case is
easier: it means considering representations corresponding to Young
diagrams with fixed second row and growing first row; for asymptotic
results in this case, see \cite{ferro}. But the most interesting case
is when both rows grow; namely, we are interested in the asymptotics
of the largest eigenvalue of $L_N$, which corresponds to the ground
energy of the Heisenberg antiferromagnet.
Some related results  are
obtained in \cite{antiferro} and cited below.

Here we study the asymptotic behavior of the
Coxeter--Laplace operator in the limit of the representations $\pi_N$
corresponding to a stationary chain of Schur--Weyl embeddings. We are
especially interested in the ``leading'' components $\Pi_0$ and $\Pi_1$
for the even and odd case, respectively, because they correspond to
the ground state of the antiferromagnetic Heisenberg chain (see
\cite{antiferro}). But $\Pi_0$ is just the discrete representation
$D_{t_0}$ corresponding to the principal tableau $t_0$ (see Example~1), and it is
shown in \cite{antiferro} that the weak limit of the operators
$\frac1N\pi_N(L_N)$ in this representation is the scalar operator with
constant $5/4$.

\begin{proposition}\label{prop:p}
The weak limit of the operators $\frac1NL_N$ in the
leading component $\Pi_1$ of the
stationary Schur--Weyl
representation of $\sinf$ with parameters $p$, $q=\sqrt{1-p^2}$ is the
identity operator $\phi(p)E$, where
$$
\phi(p)=\frac{13}{12}+\frac86p^4-\frac76p^2-\frac{\sqrt{3}}2p\sqrt{1-p^2}.
$$
\end{proposition}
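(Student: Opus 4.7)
The plan is to combine a Schur-lemma reduction with an explicit computation using Young's orthogonal form on a cyclic vector. Since $\Pi_1(h)$ is irreducible by Theorem~\ref{th:reprstr}, and since every fixed $\sigma_j\in\sinf$ commutes with all but $O(1)$ of the summands $\sigma_k$ (namely $\sigma_{j\pm1}$ together with $\sigma_N=(N,1)$ when $j$ is near the endpoints), one has $\|[\sigma_j,\frac1N\sum_k\sigma_k]\|=O(1/N)$. Any weak subsequential limit $T$ of the uniformly bounded sequence $\frac1N\sum_{k=1}^N\sigma_k$ therefore commutes with all of $\sinf$, so by Schur $T=(1-\phi(p))E$ for some scalar; that scalar is pinned down by $\langle v,Tv\rangle$ for any single fixed $v$ for which the Cesaro average converges, so the weak limit is unique once we exhibit one such $v$.

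I would compute it by evaluating
\[
1-\phi(p)=\lim_{N\to\infty}\frac1N\sum_{k=1}^{N-1}\langle\phi_{t_0},\sigma_k\phi_{t_0}\rangle
\]
for a fixed $t_0\in\Tproper_1$, noting that the periodic term $\sigma_N$ and the $O(1)$ transpositions touching the initial segment $t_0$ contribute only $O(1/N)$ to the average. The inductive formula from the proof of Theorem~\ref{th:Bernoulli} expands $\phi_{t_0}$, at every odd level $M\ge|t_0|$, as $\sum_\tau c_\tau\phi_\tau$ with $\tau$ ranging over nice-pair extensions of $t_0$ and $c_\tau$ a monomial in $p,q$ determined by the $(M-|t_0|)/2$ pair choices.

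For $k$ even in the stable regime the cells $k,k+1$ form a single nice pair, whose two configurations $t^\pm$ (types $t_{21},t_{12}$, with coefficients $p,q$ inside $\phi_{t_0}$) are linked by Young's form with axial distance $r=\pm2$: $\sigma_k$ acts on the span of $\phi_{t^+},\phi_{t^-}$ as
\[
\begin{pmatrix}1/2 & \sqrt3/2 \\ \sqrt3/2 & -1/2\end{pmatrix}.
\]
Contracting against $p\phi_{t^+}+q\phi_{t^-}$ and summing over the preceding segments (using $\sum|c_\tau|^2=1$) yields $\langle\phi_{t_0},\sigma_k\phi_{t_0}\rangle=(p^2-q^2)/2+\sqrt3\,pq$. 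For $k$ odd in the stable regime $k$ and $k+1$ lie in two different nice pairs; swapping them produces a tableau in which \emph{both} neighbouring pairs become non-nice, so the image lies outside the nice-pair expansion of $\phi_{t_0}$ and no off-diagonal term survives. Weighting the four joint pair configurations by $p^4,p^2q^2,p^2q^2,q^4$ and summing the diagonal entries $-1,+1,+1,1/3$ (same column, same row, same row, axial distance $3$) gives $\langle\phi_{t_0},\sigma_k\phi_{t_0}\rangle=-p^4+2p^2q^2+q^4/3$. Averaging the two cases with equal density $1/2$ and substituting $q^2=1-p^2$ produces the claimed formula for $\phi(p)$.

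The main obstacle is spotting and correctly evaluating the cross term in the even case. A naive ``expected Bernoulli diagonal'' that pretends $\phi_{t_0}$ is concentrated on a single tableau misses the $\sqrt3\,pq$ summand coming from the off-diagonal $\sqrt3/2$ of Young's orthogonal form, which links the two tableaux both present in the superposition $\phi_{t_0}$ with amplitudes $p$ and $q$; once averaged over $k$, this interference is precisely the $-\tfrac{\sqrt3}{2}p\sqrt{1-p^2}$ summand that distinguishes $\phi(p)$ from its pure diagonal-expectation counterpart.
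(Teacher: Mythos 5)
Your proposal is correct, and I checked the numbers: the two per-period contributions $\tfrac{p^2-q^2}{2}+\sqrt3\,pq$ (within a nice pair, axial distance $2$) and $-p^4+2p^2q^2+\tfrac{q^4}{3}$ (across adjacent pairs, diagonal entries $-1,1,1,\tfrac13$, with the swapped tableau in the last case leaving the nice-pair expansion and hence contributing no cross term), averaged with weights $\tfrac12$ and subtracted from $1$, give exactly $\tfrac{13}{12}+\tfrac43 p^4-\tfrac76 p^2-\tfrac{\sqrt3}{2}p\sqrt{1-p^2}$, consistent with the paper's value $\phi(-1/2)=5/4$ for the tensor embedding. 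The computational core is the same as the paper's: stationarity reduces the Ces\`aro average to one period, the embedded basis vector is expanded as a $p,q$-weighted superposition over nice-pair configurations, and Young's orthogonal form is applied. Where you genuinely differ is the reduction step: the paper evaluates $\lim_N \frac1N(L_N t,s)$ for \emph{arbitrary} pairs of basis tableaux $t,s$, reducing by stationarity to $((\sigma_{n+1}+\sigma_{n+2})t,s)$ and the two-step expansion $i(t)=p^2t_{pp}+q^2t_{qq}+pq(t_{pq}+t_{qp})$, so that the scalarity of the limit comes out of (unspelled) matrix-element computations; you instead use the $O(1/N)$ commutator bound together with irreducibility of $\Pi_1$ (Theorem~\ref{th:reprstr}) and Schur's lemma to know a priori that every weak limit point is scalar, so that only one diagonal matrix element, for a single fixed unit vector whose Ces\`aro average you show converges, needs to be computed. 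This buys you a cleaner argument --- no need to verify decay of off-diagonal matrix elements --- at the mild cost of the functional-analytic preliminaries (weak compactness of the unit ball and uniqueness of the limit point), which you state correctly.
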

\begin{proof}
The representation $\Pi_1$ is the inductive limit of the irreducible
representations $\pi_{(1)}$, $\pi_{(2,1)}$, $\pi_{(3,2)}$,{\ldots} of
the symmetric groups ${\mathfrak S}_1$, ${\mathfrak S}_3$, ${\mathfrak
S}_5$,{\ldots} under the stationary sequence $i_{2k+1}:{\mathfrak
S}_{2k+1}\hookrightarrow{\mathfrak S}_{2k+3}$ of
Schur--Weyl embeddings determined by $p$.
Thus a basis of $\Pi_1$ consists of the images in $\Pi_1$ of the finite Young
tableaux $t\in\T((k+1,k))$, $k=0,1,{\ldots} $. Let $t,s$ be such
tableaux and $n=2k+1$. Obviously,
$$
\lim_{N\to\infty}\frac1N(L_Nt,s)=\lim_{N\to\infty}\frac1N
\sum_{j=n+1}^{N-1}(\si_jt,s).
$$
Further, since the Schur--Weyl embeddings under consideration are
stationary, we have
$((\si_{j+2}+\si_{j+3})t,s)=((\si_{j}+\si_{j+1})t,s)$ for all $j>n$,
whence
$$
\lim_{N\to\infty}\frac1N(L_Nt,s)=((\si_{n+1}+\si_{n+2})t,s),
$$
the latter expression depending only on the image of $t$ and $s$ in
$\pi_{(k+3,k+2)}$ under the embedding $i=i_{n+2}\circ i_n$.
Now we have
$$
i(t)=p^2t_{pp}+q^2t_{qq}+pq(t_{pq}+t_{qp}),
$$
where $t_{pp}$ is the tableau obtained from $t$ by putting $n+1$ into
the second row and $n+2$ into the first row, and then $n+3$ into the
second row and $n+4$ into the first row; $t_{pq}$ is the tableau
obtained from $t$ by putting $n+1$ into
the second row and $n+2$ into the first row, and then $n+3$ into the
first row and $n+4$ into the second row; etc. A similar formula holds
for $s$, and the rest follows from straightforward calculations based
on Young's orthogonal form.
\end{proof}

In particular, for the tensor Schur--Weyl embedding we have
$\phi(-1/2)=5/4$ (in fact, one can prove that $\frac1N L_N$
has the same limit in all
components $\Pi_k$, $k=0,1,2,{\ldots} $). The maximum possible
value $\phi(p)$ corresponds to $p=-0.95543{\ldots}$ and is equal to
$c_{SW}=1.3736684{\ldots}$. The limiting operators for other natural
representations of $\sinf$ are found in \cite{antiferro}; they are
also scalar, and the corresponding constants are smaller:
\begin{itemize}
\item $1.3736684{\ldots}$ for the stationary Schur--Weyl representation with
$p=-0.95543{\ldots}$;
\item $1.25$ for the discrete
representation $D_{t_0}$ (see Example~1);
\item$1$ for the
representation induced from the identity representation of the Young
subgroup ${\mathfrak S}_{\{1,3,5,{\ldots} \}}\times{\mathfrak
S}_{\{2,4,5,{\ldots} \}}$;
\item $0.5$ for the factor representation with
Thoma parameters $\al=(1/2,1/2)$, $\beta=0$.
\end{itemize}

As proved in
\cite{yangyang2},
the maximum eigenvalue $\la_N$ of
$\pi_N(L_N)$ satisfies $\frac{\la_N}N\to c_{\rm max}=2\log2=1.38629436{\ldots}$.
Thus the ``deficiency'' of $c_{SW}$ as compared with $c_{\rm max}$ is
just about $0.0126$.
It would be very interesting to find a
representation of the infinite symmetric group in which the limit of
the Coxeter--Laplace operator has the eigenvalue $c_{\rm max}$. A step in this
direction is
Proposition~\ref{prop:apprmax} below, which shows that we can construct a
representation with an eigenvalue arbitrarily close to $c_{\rm max}$.

\begin{proposition}\label{prop:apprmax}
For every $\eps>0$ there exists $k\in\mathbb N$ and a stationary sequence
of generalized Schur--Weyl embeddings
$$
{\mathfrak S}_{1}\subset{\mathfrak S}_{1+2k}\subset{\mathfrak
S}_{1+4k}\subset{\ldots}
$$
such that the weak limit of the operators $\frac1NL_N$ in the
leading component of the
corresponding generalized Schur--Weyl
representation of $\sinf$ is a scalar
operator $cE$ with $c>c_{\rm max}-\eps$.
\end{proposition}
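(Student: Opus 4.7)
The plan is to extend the matrix-element argument of Proposition~\ref{prop:p} to generalized Schur--Weyl embeddings, and to choose the embedding vector $v_0$ to be the ground state of the finite-volume periodic antiferromagnet. Fix $k\ge 1$ and a unit $SL(2,\C)$-invariant vector $v_0\in(\C^2)^{\otimes 2k}$. As in the construction at the end of Section~\ref{product}, the map $v\mapsto v\otimes v_0$ defines a tensor generalized Schur--Weyl embedding of jump $2k$; since $v_0$ has total spin~$0$, this embedding preserves the leading spin-$1/2$ isotypic component, and the stationary chain of such embeddings produces a representation of $\sinf$ whose leading component is the inductive limit of the irreducible representations $\pi_{(1)}$, $\pi_{(k+1,k)}$, $\pi_{(2k+1,2k)},\ldots$ .

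For basis vectors $t,s$ of $\pi_{(n+1,n)}$ at level $2n+1$ and their images $\iota^m(t)=t\otimes v_0^{\otimes m}$ at level $N=(2n+1)+2km$, the Coxeter transpositions $\si_j$, $1\le j\le N$, split into three types: those that act \emph{inside} one copy of $v_0$ (there are $m(2k-1)$ of these), those that \emph{cross} two adjacent copies of $v_0$ (there are $m-1$), and $O(1)$ boundary and cyclic terms. By stationarity all within-block contributions to $(\si_j\iota^m(t),\iota^m(s))$ coincide, and similarly for the between-block ones. Dividing by $N$ and passing to the limit, I would obtain that the weak limit of $\frac{1}{N}L_N$ on the leading component is the scalar operator $c(k,v_0)E$ with
\beq
c(k,v_0)=1-\frac{A_{\rm within}+A_{\rm between}}{2k},
\eeq
where $A_{\rm within}=\sum_{c=1}^{2k-1}(\si_c v_0,v_0)$ and $A_{\rm between}=(\si_{2k,2k+1}(v_0\otimes v_0),v_0\otimes v_0)$.

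Now take $v_0$ to be the unit top eigenvector of the finite periodic operator $L_{2k}^{\rm per}=2ke-\sum_{j=1}^{2k}\si_j$ on $(\C^2)^{\otimes 2k}$, with top eigenvalue $\la_{2k}$. Since $2k$ is even and the chain is bipartite, the Lieb--Mattis theorem says this eigenvector is unique, $SL(2,\C)$-invariant, and an eigenvector of cyclic translations; the translation equivariance then forces $(\si_c v_0,v_0)=1-\la_{2k}/(2k)$ for every $c$, so $A_{\rm within}=(2k-1)(1-\la_{2k}/(2k))$. For $A_{\rm between}$, $SL(2,\C)$-invariance of $v_0$ makes its one-site reduced density matrix equal to $I/2$ (Schur's lemma applied to a single factor), and a direct computation then yields $A_{\rm between}=\frac{1}{4}\tr(\si^{\rm swap})=\frac{1}{2}$, independently of the particular $v_0$. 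Substituting,
\beq
c(k,v_0)=\frac{1}{4k}+\frac{(2k-1)\la_{2k}}{(2k)^2}.
\eeq
The Yang--Yang theorem \cite{yangyang2} gives $\la_{2k}/(2k)\to c_{\rm max}=2\log 2$ as $k\to\infty$, so $c(k,v_0)\to c_{\rm max}$, and for any $\eps>0$ choosing $k$ large enough produces $c(k,v_0)>c_{\rm max}-\eps$.

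The main obstacle I anticipate is not the weak-limit calculation or the computation $A_{\rm between}=\frac{1}{2}$, both of which are routine, but rather the clean identification of $v_0$: one needs a vector in the top eigenspace of $L_{2k}^{\rm per}$ that is simultaneously $SL(2,\C)$-invariant and an eigenvector of cyclic translations, so that $A_{\rm within}$ depends only on $\la_{2k}$. Lieb--Mattis (uniqueness and singlet character of the ground state of the antiferromagnetic Heisenberg chain on an even bipartite ring) provides this directly, and the remainder is a straightforward generalization of the argument in Proposition~\ref{prop:p}.
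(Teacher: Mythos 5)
Your proposal is correct: the tensor-type generalized embedding $v\mapsto v\otimes v_0$ with $v_0$ an $SL(2,\C)$-invariant unit vector in $(\C^2)^{\otimes 2k}$ is exactly the class of ``tensor generalized Schur--Weyl embeddings'' allowed at the end of Section~\ref{product}, the block-counting computation of the limiting matrix elements is sound, and your closed formula $c(k,v_0)=\frac1{4k}+\frac{(2k-1)\la_{2k}}{4k^2}$ even reproduces the paper's value $5/4$ at $k=1$ (the tensor case $p=-1/2$), which is a good consistency check; Yang--Yang then gives $c(k,v_0)\to c_{\rm max}$. The route, however, differs from the paper's in its key ingredients. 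The paper works with an abstract stationary embedding $\pi_{(1)}\hookrightarrow\pi_{(k+1,k)}$ chosen so that the generator maps to $v_{\rm max}$, the top eigenvector of the \emph{odd} periodic Laplacian $L_{2k+1}$; the fact that $v_{\rm max}$ lies in the leading component $\pi_{(k+1,k)}$ is quoted from \cite{antiferro}, and the stationarity argument of Proposition~\ref{prop:p} collapses $\frac1N L_N$ to $\frac1{2k}T^{mk}L_{2k+1}$ plus a uniformly bounded remainder $R_k$, so the constant is $\la^{(k)}_{\rm max}/(2k)+O(1/k)$ rather than an exact value. You instead use an \emph{even} block, replace the citation of \cite{antiferro} by Lieb--Mattis (unique singlet ground state of the even ring, hence an $SL(2,\C)$-invariant, translation-covariant $v_0$), and evaluate the cross-block bond exactly via the maximally mixed one-site marginal ($A_{\rm between}=1/2$). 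What your version buys is an explicit constant and a representation realized concretely in an incomplete tensor product; what it costs is the extra physical input (Lieb--Mattis), which the paper's bounded-remainder argument avoids — and note that even in your setup the Lieb--Mattis input can be weakened, since the crude bound $|(\si_{2k}v_0,v_0)|\le 1$ already gives $c\ge\la_{2k}/(2k)-3/(4k)$, which suffices. Two small points to tidy for full rigor (the paper is equally terse here): the weak limit should be taken over all $N$, with the partial block contributing only $O(k/N)$, and convergence of matrix elements on the dense union of finite levels upgrades to weak operator convergence via the uniform bound $\|\frac1N\pi(L_N)\|\le2$.
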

\begin{proof}
Let $t,s\in\T((mk+1,mk))$, and let  $N=2nk+1$ for $n>m$. Arguing as
in the proof of Proposition~\ref{prop:p}, we have
\begin{eqnarray*}
\lim_{N\to\infty}\frac1N(L_Nt,s)&=&\frac1{2k}\big((\si_{mk+2}+{\ldots}
+\si_{(m+2)k+1})t,s\big)\\
&=&\frac1{2k}\big((T^{mk}L_{2k+1})t,s\big)
+\frac1{2k}\big(R_kt,s\big),
\end{eqnarray*}
where $R_k=\si_{(m+2)k+1}-\si_{mk+1}-(mk+1,(m+2)k+1)$ and $T$ is the
endomorphism of $\sinf$  defined by the formulas $(Tg)(1)=1$,
$(Tg)(i)=g(i-1)+1$ for $i>1$ (the infinite shift).
Now let $v_{\rm max}$ be the eigenvector of $L_{2k+1}$ corresponding
to the largest eigenvalue $\la_{\rm max}^{(k)}$. As proved in \cite{antiferro}, $v_{\rm
max}$ lies in the irreducible  representation $\pi_{(k+1,k)}$. Choose an embedding
$\pi_{(1)}\hookrightarrow\pi_{(k+1,k)}$
such that the only tableau in $\T((1))$ goes to $v_{\rm max}$.
Then, since the generalized embedding is stationary, we have
$$
\frac1{2k}\big((T^{mk}L_{2k+1})t,s\big)=
\frac1{2k}\big(L_{2k+1}[t]_{2k+2},[s]_{2k+2}\big)=\frac{\la_{\rm
max}^{(k)}}{2k},
$$
and the proposition follows.
\end{proof}

The calculation in \cite{yangyang2} of the limit value $c_{\rm max}$,
based on Bethe ansatz, is very indirect, and, in particular, relies on considering a more
general model that has no interpretation in terms of the
symmetric groups. However, studying the
asymptotics of the spectrum of the Coxeter--Laplace operator  is a
natural problem for the theory of symmetric groups. Thus a challenge
is to obtain a representation-theoretic proof of this result.

\end{document}